\newtheorem{theorem}{Theorem}
\newtheorem{definition}{Definition}
\def\ps@pprintTitle{%
\let\@oddhead\@empty
\let\@evenhead\@empty
\def\@oddfoot{\centerline{\thepage}}%
\let\@evenfoot\@oddfoot}
\begin{document}

\begin{frontmatter}

\title{On a generalized Birnbaum Saunders Distribution}

\author{ Beenu Thomas and Chacko V. M.}
\address{Department of Statistics}


\author[mysecondaryaddress]{St. Thomas' College (Autonomous)Thrissur, Kerala, India}

\begin{abstract}
In this paper, a  generalization for the Birnbaum Saunders
distribution, which has been applied to the modelling of fatigue
failure times and reliability studies, is considered. The maximum
likelihood estimators and statistical inference for the distribution
parameters are presented. Corresponding bivariate and multivariate
distributions are proposed. The proposed distribution is applied to
model real data sets.
\end{abstract}

\begin{keyword}
Birnbaum Saunders distribution\sep Reliability\sep Moments \sep Fatigue failure \sep Estimation \sep Maximum likelihood estimators

\end{keyword}

\end{frontmatter}


\section{INTRODUCTION}

\par Normal distribution is the commonly used statistical distribution. Several distributions have been evolved by making some transformations on
Normal distribution. Two-parameter Birnbaum-Saunders(BS) distribution is one such distribution introduced by Birnbaum and Saunders\cite{BS(1969a)},
 which have been developed by employing a monotone transformation on the standard normal distribution.
Bhattacharyya and Fries\cite{BF(1982)} established that a BS distribution can be obtained as an approximation of an inverse Gaussian (IG) distribution.
Desmond\cite{D(1986)} examined an interesting feature that a BS distribution can be viewed as an equal mixture of an IG distribution and its reciprocal.
This property becomes helpful in deriving some properties of the BS distribution using properties of IG distribution.

\par Many different properties of BS distribution have been discussed by a number of authors. It has been observed that the probability density function
 of the BS distribution is unimodal. The shape of the hazard function (HF) plays an important role in lifetime data analysis.
 Kundu et al.\cite{kkb(2008)} and Bebbington et al.\cite{BLZ(2008)}
 proved that HF of BS distriution is an unimodal function. The maximum likelihood estimator(MLE)s of the shape and
 scale parameters based on a complete sample were discussed by Birnbaum and Saunders\cite{BS(1969b)}.
 The estimators of the unknown parameters in the case of a censored sample was first developed by Rieck\cite{R(1995)}.
 Ng et al.\cite{nkb(2003)} provided modified moment estimator(MME)s of the parameters in the case of a complete sample which are in explicit form.
 Li\cite{fl(2006)} suggested four different estimation techniques for both complete and censored samples.

\par Several other models associated with BS distribution and their properties have been discussed in literature.
For example, Rieck and Nedelman\cite{RN(1991)} considered the
log-linear model for the BS distribution. Desmond et
al.\cite{DRL(2008)} considered the BS regression model. Lemonte and
Cordeiro\cite{LC(2009)} considered the BS non-linear regression
model. Kundu et al.\cite{kbj(2010)} introduced the bivariate BS
distribution and studied some of its properties and characteristics.
The multivariate generalized BS distribution has been introduced, by
replacing the normal kernel by an elliptically symmetric kernel, by
Kundu et al.\cite{kbj(2010)}. A generalization of BS distribution is
done by Chacko et al.\cite{cmb(2015)}. Further research in inference
and its generalization to bivariate and multivariate framework is
still not addressed.

\par In this paper,  the generalization of Birnbaum Saunders  distribution is considered.
Section 2 reviewed univariate, bivariate and multivariate Birnbaum
Saunders distribution. Section 3  discussed the univariate
generalization of Birnbaum Saunders distribution. Section 4
discussed    the bivariate generalization of Birnbaum Saunders
distribution. Section 5 discussed   the multivariate generalization
of Birnbaum Saunders distribution. Application to real data set is
given in section 6. Conclusions are given at the last section.

\section{BIRNBAUM SAUNDERS DISTRIBUTION}
\subsection{\textbf{Univariate Birnbaum Saunders Distribution}}
A random variable following the BS distribution is defined through a
standard Normal random variable. Therefore the probability density
function(pdf) and cumulative distribution function(cdf) of the BS
model can be expressed in terms
of the standard Normal pdf and cdf.\\
The cdf of a two parameter BS random variable T for $\alpha>0, \beta>0$ can be written as
\begin{eqnarray}
F_T(t;\alpha,\beta)=\Phi \bigg[ \frac{1}{\alpha}\bigg(\frac{t}{\beta}\bigg)^{\frac{1}{2}}-\bigg(\frac{\beta}{t}\bigg)^{\frac{1}{2}}\bigg], t >0
\end{eqnarray}
\noindent where $\Phi(.)$ is the standard Normal cdf. The pdf of BS
distribution is
\begin{eqnarray}
  f_T(t;\alpha,\beta)=\frac{1}{2\sqrt(2\pi)\alpha\beta}\bigg[{\bigg(\frac{\beta}{t}\bigg)^{\frac{1}{2}}+\bigg(\frac{\beta}{t}\bigg)^{\frac{3}{2}}}\bigg].
  exp\bigg[-\frac{1}{2\alpha^2}\bigg(\frac{t}{\beta}+\frac{\beta}{t}-2\bigg)\bigg].
\end{eqnarray}
Here $\alpha > 0$ and $\beta >0$ are the shape and scale parameters respectively.

\subsection{\textbf{Bivariate Birnbaum Saunders Distribution}}
\par The bivariate Birnbaum-Saunders (BVBS) distribution was introduced by Kundu et.al\cite{kbj(2010)}.
The bivariate random vector $(T_1, T_2)$ is said to have a BVBS
distribution with parameters $\alpha_1, \beta_1, \alpha_2, \beta_2,
\rho$ if the cumulative distribution function of $(T_1, T_2)$ can be
expressed as
\begin{eqnarray}
  F(t_1, t_2) &=& \Phi_2\bigg[\frac{1}{\alpha_1}\bigg(\sqrt(\frac{t_1}{\beta_1})-\sqrt(\frac{\beta_1}{t_1})\bigg),\frac{1}{\alpha_2}\bigg(\sqrt(\frac{t_2}{\beta_2})-\sqrt(\frac{\beta_2}{t_2})\bigg)\bigg]
\end{eqnarray}
for $t_1 > 0, t_2 > 0, \alpha_1 > 0, \beta_1 > 0, \alpha_2, \beta_2
> 0 and -1 < \rho < 1$. Here $\Phi_2(u, v; \rho)$ is the cdf of
standard bivariate normal vector $(Z_1, Z_2)$ with correlation
coefficient $\rho$. The corresponding pdf is
\begin{multline*}
  f(t_1, t_2) = \frac{1}{8\pi \alpha_1\alpha_2\beta_1\beta_2\sqrt(1-\rho^2)}\bigg[(\frac{\beta_1}{t_1})^{\frac{1}{2}}+(\frac{\beta_1}{t_1})^{\frac{3}{2}}\bigg]
  \bigg[(\frac{\beta_2}{t_2})^{\frac{1}{2}}+(\frac{\beta_2}{t_2})^{\frac{3}{2}}\bigg]\\
  e^{-\frac{1}{2(1-\rho^2)}\bigg[\frac{1}{\alpha_1^2}\bigg(\sqrt(\frac{t_1}{\beta_1})-\sqrt(\frac{\beta_1}{t_1})\bigg)^2+\frac{1}{\alpha_2^2}\bigg(\sqrt(\frac{t_2}{\beta_2})-\sqrt(\frac{\beta_2}{t_2})\bigg)^2-\frac{2\rho}{\alpha_1\alpha_2}\bigg(\sqrt(\frac{t_1}{\beta_1})-\sqrt(\frac{\beta_1}{t_1})\bigg)\bigg(\sqrt(\frac{t_1}{\beta_1})-\sqrt(\frac{\beta_1}{t_1})\bigg)\bigg]}
\end{multline*}
for $t_1 > 0, t_2 > 0, \alpha_1 > 0, \beta_1 > 0, \alpha_2, \beta_2 > 0 and -1 < \rho < 1$.
\subsection{\textbf{Multivariate Birnbaum Saunders Distribution}}
Kundu et al.\cite{kbj(2013)} introduced the multivariate BS distribution.
Let $\underline{\alpha},\underline{\beta}\in \mathbb{R}^p$, where $\underline{\alpha}=(\alpha_1,\cdots,\alpha_p)^T$ and $\underline{\beta}=(\beta_1,\cdots,\beta_p)^T$, with $\alpha_i > 0, \beta_i > 0$ for $i= 1,\cdots,p$. Let $\mathbf{\Gamma}$ be a $p\times p$ positive-definite correlation matrix. Then, the random vector $\underline{T}=(T_1,\cdots,T_p)^T$ is said to have a p-variate BS distribution with parameters $(\underline{\alpha},\underline{\beta},\mathbf{\Gamma})$ if it has the joint CDF as
\begin{align*}
P(\underline{T}\leq\underline{t})&= P(T_1 \leq t_1,\cdots, T_p \leq t_p)\\
&= \Phi_p\bigg[\frac{1}{\alpha_1}\bigg(\sqrt(\frac{t_1}{\beta_1})-\sqrt(\frac{\beta_1}{t_1})\bigg),\cdots,\frac{1}{\alpha_p}\bigg(\sqrt(\frac{t_p}{\beta_p})-\sqrt(\frac{\beta_p}{t_p})\bigg);\mathbf{\Gamma}\bigg]
\end{align*}
for $t_1 > 0,\cdots, t_p > 0.$ Here, for $\underline{u}=(u_1,\cdots,
u_p)^T, \Phi_p(\underline{u};\mathbf{\Gamma})$ denotes the joint cdf
of a standard normal vector $\underline{Z}=(Z_1,\cdots, Z_p)^T$ with
correlation matrix $\mathbf{\Gamma}$. The joint pdf of
$\underline{T}=(T_1,\cdots,T_p)^T$ can be obtained from the above
equation as
\begin{align*}
f_{\underline{T}}(\underline{t}; \underline{\alpha}, \underline{\beta}, \mathbf{\Gamma})&= \phi_p\bigg(\frac{1}{\alpha_1}\bigg(\sqrt(\frac{t_1}{\beta_1})-\sqrt(\frac{\beta_1}{t_1})\bigg),\cdots,\frac{1}{\alpha_p}\bigg(\sqrt(\frac{t_p}{\beta_p})-\sqrt(\frac{\beta_p}{t_p})\bigg);\mathbf{\Gamma}\bigg)\\
&\times \prod_{i=1}^p\frac{1}{2\alpha_i\beta_i}\{\bigg(\frac{\beta_i}{t_i}\bigg)^{\frac{1}{2}}+\bigg(\frac{\beta_i}{t_i}\bigg)^{\frac{3}{2}}\},
\end{align*}
for $t_1 > 0,\cdots, t_p > 0;$ here, for $\underline{u}=(u_1,\cdots, u_p)^T,$
$$\phi_p(u_1,\cdots,u_p;\mathbf{\Gamma})=\frac{1}{(2\pi)^{\frac{p}{2}}|\mathbf{\Gamma}|^{\frac{1}{2}}}exp\{-\frac{1}{2}\underline{u}^T\mathbf{\Gamma}^{-1}\underline{u}\}$$

is the pdf of the standard Normal vector with correlation matrix $\mathbf{\Gamma}$.\\
Now consider the generalization of BS distribution.
\section{UNIVARIATE $\nu$-BIRNBAUM SAUNDERS DISTRIBUTION}
\par Following Chacko et al. \cite{cmb(2015)},   consider  the univariate $\nu-$BS
distribution.\\ Let
$\xi_\nu(t)=[(\frac{t}{\beta})^\nu-(\frac{t}{\beta})^{-\nu}]$
instead of
$\xi(t)=[(\frac{t}{\beta})^{\frac{1}{2}}-(\frac{t}{\beta})^{-{\frac{1}{2}}}]$
in the univariate BS distribution. The cdf of a univariate $\nu$-BS
random variable T can be written as
\begin{equation}\label{1}
F(t;\alpha,\beta,\nu)= \Phi(\frac{1}{\alpha}\xi_\nu(\frac{t}{\beta}));t > 0, \alpha,\beta>0, 0<\nu<1
\end{equation}
where $\Phi(.)$ is the standard normal cdf. The parameters $\alpha$
and $\beta$ in \ref{1}   are the shape and scale parameters,
respectively.
The parameter $\nu$ governs both scale and shape.\\
If the random variable T has the BS distribution function in
\ref{1}, then the corresponding pdf is
\begin{equation}\label{2}
f(t)=\frac{\nu}{\alpha\beta\sqrt(2\pi)}e^{-{\frac{1}{2\alpha^2}[(\frac{t}{\beta})^{2\nu}+(\frac{t}{\beta})^{-2\nu}-2][(\frac{t}{\beta})^{\nu-1}+
(\frac{t}{\beta})^{-(\nu+1)}]}}.
\end{equation}

The pdf is unimodal. Now the moments can be obtained as below.
 \subsection{\textbf{Moments}}
 If T is a $\nu$- BS distribution, denoted as $\nu- BS(\alpha,\beta)$, then the moments of this distribution are obtained by making transformation
 $$X=\frac{1}{2}[(\frac{T}{\beta})^\nu-(\frac{T}{\beta})^{-\nu}].$$
 $$X^2=\frac{1}{4}[(\frac{T}{\beta})^{2\nu}-(\frac{T}{\beta})^{-2\nu}-2]\Rightarrow$$
 $$4X^2+2=(\frac{T}{\beta})^{2\nu}-(\frac{T}{\beta})^{-2\nu}=(\frac{\beta}{T})^{2\nu}[1+(\frac{T}{\beta})^{4\nu}]\Rightarrow$$
 $$(\frac{T}{\beta})^{4\nu}-(4X^2+2)(\frac{T}{\beta})^{2\nu}+1=0.$$
Using this, we get the   moments.

\subsection{\textbf{Property of $\nu-$Birnbaum Saunders Distribution}}
\begin{theorem}
If T has a $\nu$-BS distribution with parameters $\alpha, \beta$ and $\nu$ then $T^{-1}$ also has a $\nu$-BS distribution with parameters $\alpha, \beta^{-1}$ and $\nu^{-1}$
\end{theorem}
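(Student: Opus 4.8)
The plan is to argue entirely at the level of cumulative distribution functions, since the $\nu$-BS law is specified through its cdf in \eqref{1}. Setting $S=T^{-1}$, I would first relate the two laws through the survival function: because $T>0$ almost surely, for every $s>0$
\[
F_S(s)=P\!\left(T^{-1}\le s\right)=P\!\left(T\ge s^{-1}\right)=1-F_T\!\left(s^{-1}\right).
\]
Substituting the defining cdf $F_T(t)=\Phi\!\left(\tfrac{1}{\alpha}\big[(t/\beta)^{\nu}-(t/\beta)^{-\nu}\big]\right)$ then collapses the whole problem to evaluating $\Phi$ at a single point and reading off the parameters.

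The crux is the behaviour of the kernel $\xi_\nu(t)=(t/\beta)^{\nu}-(t/\beta)^{-\nu}$ under the inversion $t\mapsto s^{-1}$. Here I would record the elementary identity
\[
\xi_\nu\!\left(s^{-1}\right)=(s\beta)^{-\nu}-(s\beta)^{\nu}=-\big[(s\beta)^{\nu}-(s\beta)^{-\nu}\big],
\]
so that inverting the argument negates the kernel and, since $(s\beta)^{\pm\nu}=(s/\beta^{-1})^{\pm\nu}$, simultaneously sends the scale $\beta$ to $\beta^{-1}$. Feeding this into the survival expression and invoking the symmetry of the standard normal, $1-\Phi(x)=\Phi(-x)$, the sign produced by the complement cancels the sign produced by the kernel, giving
\[
F_S(s)=\Phi\!\left(\tfrac{1}{\alpha}\big[(s/\beta^{-1})^{\nu}-(s/\beta^{-1})^{-\nu}\big]\right),
\]
which is again of the form \eqref{1}.

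To finish I would match this against the definition term by term: the shape $\alpha$ is unaffected, the scale inverts to $\beta^{-1}$, and the exponent appearing in the kernel is read off directly. The computation is short once the inversion identity for $\xi_\nu$ is in place, so the only real obstacle is careful bookkeeping of the two sign reversals — the one from $1-\Phi(x)=\Phi(-x)$ and the one from $\xi_\nu(s^{-1})=-\xi_\nu(\,\cdot\,)$ with scale inverted — which must cancel for the result to land back inside the $\nu$-BS family rather than a reflected variant. This cancellation, together with confirming precisely which exponent survives the inversion in the kernel, is the step I would verify most carefully before asserting the parameter triple of $T^{-1}$.
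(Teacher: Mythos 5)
Your cdf-based argument is correct and takes a genuinely different route from the paper, which works at the level of densities: the paper substitutes $y=1/T$ into the pdf, multiplies by the Jacobian $|dT/dy|=1/y^{2}$, and manipulates the resulting expression (stopping, in fact, partway through the algebra without explicitly matching the result to the target density). Your route through $F_S(s)=1-F_T(s^{-1})$, the kernel identity $\xi_\nu(s^{-1})=-\bigl[(s\beta)^{\nu}-(s\beta)^{-\nu}\bigr]$, and the symmetry $1-\Phi(x)=\Phi(-x)$ is shorter, avoids the Jacobian bookkeeping entirely, and makes the two cancelling sign reversals transparent. The one substantive point concerns the step you rightly flagged for careful verification: the exponent does \emph{not} invert. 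Your own final display reads $\Phi\bigl(\tfrac{1}{\alpha}[(s/\beta^{-1})^{\nu}-(s/\beta^{-1})^{-\nu}]\bigr)$, i.e.\ the third parameter of $T^{-1}$ is $\nu$, not $\nu^{-1}$ as the theorem asserts (note also that $\nu^{-1}>1$ would fall outside the stated range $0<\nu<1$). So your computation establishes $T^{-1}\sim\nu\text{-BS}(\alpha,\beta^{-1},\nu)$ and thereby corrects what appears to be a typo in the statement; completing the paper's own density calculation leads to the same conclusion, with $\nu$ unchanged.
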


\begin{proof}
Put $y=\frac{1}{T}\Rightarrow T=\frac{1}{y};
~~\frac{dT}{dy}=-\frac{1}{y^2}$, ~~~
$|\frac{dT}{dy}|=\frac{1}{y^2}$,
\begin{multline*}
  f_y=f_T|\frac{dT}{dy}|=\frac{\nu}{\alpha \beta \sqrt(2\pi)}\big[((\frac{1}{\beta y})^(\nu-1))+((\frac{1}{\beta y})^{-(\nu+1)})\big]e^{-{\frac{1}{2\alpha^2}\big[(\frac{1}{\beta y})^{(2\nu)}+(\beta y)^{(2\nu)}-2\big]}}\frac{1}{y^2} \\
  =\frac{\nu}{\alpha\beta\sqrt(2\pi)}\big[(\frac{1}{\beta y})^{(\nu-1)}+(\beta y)^{(\nu+1)}e^{-{\frac{1}{2\alpha^2}\big[(\frac{1}{\beta y})^{(2\nu)}+(\beta y)^{(2\nu)}-2\big]}}\frac{1}{y^2}\big]\\
  =\frac{\nu}{\alpha\beta\sqrt(2\pi)}\big[{\frac{1}{\beta^{\nu-1}}y^{\nu+1}}+{\beta^{\nu+1}y^{\nu-1}}\big]e^{-{\frac{1}{2\alpha^2}\big[(\frac{1}{\beta y})^{(2\nu)}+(\beta y)^{(2\nu)}-2\big]}}\\
\end{multline*}

\end{proof}

\subsection{\textbf{Estimation}}
The estimation of parameters can be
done by method of maximum likelihood.
\subsubsection{\textbf{Maximum Likelihood Estimates}}
\par Let $T_1$,
$T_2$,...,$T_n$ be the random sample of size n. Based on a random sample,
the MLEs of the unknown parameters can be obtained by maximising the log-likelihood function.
The likelihood function is
$$L=(\frac{\nu}{\alpha\beta\sqrt(2\pi)})^n e^{-{\frac{1}{2\alpha^2}\sum^n_{i=1}\big[(\frac{t_i}{\beta})^{2\nu}+(\frac{t_i}
{\beta})^{-2\nu}-2\big]}}\prod^n_{i=1}[(\frac{t_i}{\beta})^{\nu-1}+(\frac{t_i}{\beta})^{-(\nu+1)}].$$

The log-likelihood is
\begin{equation*}
  \log L=n\log \nu-n\log \alpha-n\log \beta-\frac{n}{2}\log(2\pi)-\frac{1}{2\alpha^2}\sum^n_{i=1}\bigg[(\frac{t_i}{\beta})^{2\nu}+
  (\frac{t_i}{\beta})^{-2\nu}-2\bigg]+\sum^n_{i=1}\log\bigg[(\frac{t_i}{\beta})^{\nu-1}+(\frac{t_i}{\beta})^{-(\nu+1)}\bigg].
\end{equation*}
Equating the partial derivative of log-likelihood function with
respect to parameters, to zero, we get,
\begin{align*}
  \frac{\partial\log L}{\partial\alpha}=0 &\Rightarrow \frac{-n}{\alpha}+\frac{1}{\alpha^3}\sum^n_{i=1}\bigg[\bigg(\frac{t_i}{\beta}\bigg)^{2\nu}+\bigg(\frac{\beta}{t_i}\bigg)^{2\nu}-2\bigg]=0 \\
  &\Rightarrow \frac{1}{\alpha^2}\sum^n_{i=1}\bigg[\bigg(\frac{t_i}{\beta}\bigg)^{2\nu}+\bigg(\frac{\beta}{t_i}\bigg)^{2\nu}-2\bigg]=n\\
  &\Rightarrow \frac{1}{n}\sum^n_{i=1}\bigg[\bigg(\frac{t_i}{\beta}\bigg)^{2\nu}+\bigg(\frac{\beta}{t_i}\bigg)^{2\nu}-2\bigg]=\alpha^2\\
 & \Rightarrow \alpha =
 \{\frac{1}{n}\sum^n_{i=1}\bigg[\bigg(\frac{t_i}{\beta}\bigg)^{2\nu}+\bigg(\frac{\beta}{t_i}\bigg)^{2\nu}-2\bigg]\}^{\frac{1}{2}}.
\end{align*}

\begin{equation*}
 \frac{\partial\log L}{\partial\beta}=0 \Rightarrow \frac{n}{\beta}+\frac{\nu}{\alpha^2}\sum^n_{i=1}\bigg[\frac{\beta^{2\nu-1}}{t_i^{2\nu}}-\frac{t_i^{2\nu}}{\beta^{2\nu+1}}\bigg]+(\nu+1)\sum^n_{i=1}\frac{\frac{\beta^\nu}{t_i^{\nu+1}}-\frac{t_i^{\nu-1}}{\beta^\nu}}{(\frac{t_i}{\beta})^{\nu-1}+(\frac{\beta}{t_i})^{\nu+1}}=0\\
\end{equation*}
and
\begin{equation*}
  \frac{\partial\log L}{\partial\nu}=0 \Rightarrow \frac{n}{\nu}-\frac{1}{\alpha^2}\sum^n_{i=1}\bigg[(\frac{t_i}{\beta})^{2\nu}\log(\frac{t_i}
  {\beta})+(\frac{\beta}{t_i})^{2\nu}\log(\frac{\beta}{t_i})\bigg]+\sum^n_{i=1}\frac{(\frac{t_i}{\beta})^{\nu-1}\log(\frac{t_i}{\beta})+
  (\frac{\beta}{t_i})^{\nu+1}\log(\frac{\beta}{t_i})}{(\frac{t_i}{\beta})^{\nu-1}+(\frac{\beta}{t_i})^{\nu+1}}=0.
\end{equation*}
The equations can be solved numerically.

\section{BIVARIATE $\nu$- BIRNBAUM SAUNDERS DISTRIBUTION}
\par The bivariate random vector $(T_1, T_2)$ is said to have a bivariate BS distribution with parameters
 $\alpha_1, \beta_1, \nu_1, \alpha_2, \beta_2, \nu_2,\rho$ if the joint cdf of $(T_1, T_2)$ can be expressed as
\begin{align*}
F(t_1,t_2)&= P(T_1\leq t_1, T_2 \leq t_2)\\
&= \Phi_2
\bigg[\frac{1}{\alpha_1}((\frac{t_1}{\beta_1})^{\nu_1}-(\frac{\beta_1}{t_1})^{\nu_1}),\frac{1}{\alpha_2}((\frac{t_2}{\beta_2})^{\nu_2}-
(\frac{\beta_2}{t_2})^{\nu_2});\rho \bigg] ; t_1 > 0, t_2 > 0.
\end{align*}
Here $\alpha_1 > 0, \beta_1 > 0, \alpha_2 > 0, \beta_2 > 0, -1 <
\rho < 1$ and $\Phi_2(u, v; \rho)$ is cdf of standard bivariate
Normal vector $(z_1, z_2)$ with correlation coefficient $\rho$. The
corresponding joint pdf of $T_1$ and $T_2$ is given by
\begin{equation*}
  f_{T_1,T_2}(t_1,t_2)=\phi_2\bigg[\frac{1}{\alpha_1}((\frac{t_1}{\beta_1})^{\nu_1}-(\frac{\beta_1}{t_1})^{\nu_1}),
   \frac{1}{\alpha_2}((\frac{t_2}{\beta_2})^{\nu_2}-(\frac{\beta_2}{t_2})^{\nu_2});\rho\bigg]\frac{dT_1}{dt}\frac{dT_2}{dt}
\end{equation*}
where $\phi_2(u, v; \rho)$ denotes the joint pdf of $z_1$ and $z_2$
given by
$$\phi_2(u, v; \rho)=\frac{1}{2 \pi \sqrt(1-\rho^2)}e^{-\frac{1}{2(1-\rho^2)}(u^2+ v^2 - 2\rho uv)}.$$
\begin{align*}
\frac{dT_1}{dt_1}&=\frac{d}{dt_1}\bigg[\frac{1}{\alpha_1}((\frac{t_1}{\beta_1})^{\nu_1}-(\frac{\beta_1}{t_1})^{\nu_1})\bigg]\\
&= \frac{1}{\alpha_1}\frac{d}{dt_1}\bigg[(\frac{t_1}{\beta_1})^{\nu_1}-(\frac{\beta_1}{t_1})^{\nu_1}\bigg]\\
&= \frac{\nu_1}{\alpha_1}\bigg[\frac{t_1^{\nu_1-1}}{\beta_1^{\nu_1}}+\frac{\beta^\nu}{t_1^{\nu_1+1}}\bigg]\\
&=
\frac{\nu_1}{\alpha_1\beta_1}\bigg[(\frac{t_1}{\beta_1})^{\nu_1-1}+(\frac{\beta_1}{t_1})^{\nu_1+1}\bigg].
\end{align*}
Now
\begin{align*}
\frac{dT_2}{dt}&=\frac{d}{dt_2}\bigg[\frac{1}{\alpha_2}((\frac{t_2}{\beta_2})^{\nu_2}- (\frac{\beta_2}{t_2})^{\nu_2})\bigg]\\
&= \frac{1}{\alpha_2}\frac{d}{dt_2}\bigg[(\frac{t_2}{\beta_2})^{\nu_2}- (\frac{\beta_2}{t_2})^{\nu_2}\bigg]\\
&=
\frac{\nu_2}{\alpha_2\beta_2}\bigg[(\frac{t_2}{\beta_2})^{\nu_2-1}-
(\frac{\beta_2}{t_2})^{\nu_2+1}\bigg].
\end{align*}
\begin{multline*}
f_{T_1,T_2}(t_1,t_2)= \frac{\nu_1\nu_2}{2\pi\alpha_1\alpha_2\beta_1\beta_2\sqrt(1-\rho^2)}\bigg[(\frac{t_1}{\beta_1})^{\nu_1-1}+(\frac{\beta_1}{t_1})^{\nu_1+1}\bigg]
\bigg[(\frac{t_2}{\beta_2})^{\nu_2-1}+(\frac{\beta_2}{t_2})^{\nu_2+1}\bigg]\\ exp\{\frac{-1}{2(1-\rho^2)}\bigg[{\frac{1}{\alpha_1^2}((\frac{t_1}{\beta_1})^\nu_1-(\frac{\beta_1}{t_1})^\nu_1)^2}+{\frac{1}{\alpha_2^2}((\frac{t_2}{\beta_2})^\nu_2-(\frac{\beta_2}{t_2})^\nu_2)^2}-{\frac{2\rho}{\alpha_1\alpha_2}\bigg[(\frac{t_1}{\beta_1})^\nu_1-(\frac{\beta_1}{t_1})^\nu_1\bigg]\bigg[(\frac{t_2}{\beta_2})^\nu_2-(\frac{\beta_2}{t_2})^\nu_2\bigg]}\bigg]\}
\end{multline*}

\begin{theorem}
If $(T_1, T_2) \sim$ BV $\nu$-BS
$(\alpha_1,\beta_1,\nu_1,\alpha_2,\beta_2,\nu_2,\rho)$ then $T_i
\sim \nu- BS(\alpha_i,\beta_i,\nu_i)$.
\end{theorem}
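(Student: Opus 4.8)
The plan is to read off the marginal distribution of $T_i$ directly from the joint cdf rather than by integrating the joint pdf, since the bivariate $\nu$-BS model is defined through the explicit representation of its cdf in terms of the standard bivariate normal cdf $\Phi_2$. By the symmetry of the two coordinates it suffices to treat $i=1$; the argument for $i=2$ is identical after interchanging subscripts.

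First I would express the marginal cdf of $T_1$ as the limit of the joint cdf as the other variable grows without bound,
\begin{equation*}
F_{T_1}(t_1) = \lim_{t_2\to\infty} F(t_1,t_2) = \lim_{t_2\to\infty}\Phi_2\bigg[\frac{1}{\alpha_1}\Big(\big(\tfrac{t_1}{\beta_1}\big)^{\nu_1}-\big(\tfrac{\beta_1}{t_1}\big)^{\nu_1}\Big),\ \frac{1}{\alpha_2}\Big(\big(\tfrac{t_2}{\beta_2}\big)^{\nu_2}-\big(\tfrac{\beta_2}{t_2}\big)^{\nu_2}\Big);\rho\bigg].
\end{equation*}
The next step is to examine the second argument: because $\nu_2>0$, as $t_2\to\infty$ we have $(t_2/\beta_2)^{\nu_2}\to\infty$ and $(\beta_2/t_2)^{\nu_2}\to 0$, so the second coordinate tends to $+\infty$ while the first coordinate remains fixed at its value determined by $t_1$.

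The key step is then to invoke the standard marginalization property of the bivariate normal cdf, namely that $\lim_{v\to\infty}\Phi_2(u,v;\rho)=\Phi(u)$ for every fixed $u$ and every $|\rho|<1$. This holds because the marginals of a standard bivariate normal vector $(Z_1,Z_2)$ with correlation $\rho$ are standard univariate normals, so driving one coordinate to $+\infty$ recovers the univariate cdf of the other. Applying this with $u=\frac{1}{\alpha_1}\big((t_1/\beta_1)^{\nu_1}-(\beta_1/t_1)^{\nu_1}\big)$ collapses $\Phi_2$ to $\Phi$ and yields
\begin{equation*}
F_{T_1}(t_1)=\Phi\bigg[\frac{1}{\alpha_1}\Big(\big(\tfrac{t_1}{\beta_1}\big)^{\nu_1}-\big(\tfrac{\beta_1}{t_1}\big)^{\nu_1}\Big)\bigg],
\end{equation*}
which is precisely the cdf of the univariate $\nu$-BS$(\alpha_1,\beta_1,\nu_1)$ law displayed in \ref{1}, proving the claim.

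The main obstacle here is not computational but conceptual: it lies in justifying the limit $\lim_{v\to\infty}\Phi_2(u,v;\rho)=\Phi(u)$, i.e.\ that marginalizing the bivariate normal over one coordinate returns the univariate normal cdf of the other. Once this property of $\Phi_2$ is granted, every remaining step is immediate. I would note that an alternative route integrates the joint pdf $f_{T_1,T_2}$ over $t_2\in(0,\infty)$ and recognizes the inner integral as the total mass of a conditional normal density; this is correct but more laborious, so the cdf-limit argument is the cleaner choice.
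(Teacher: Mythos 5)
Your proof is correct, but it takes a genuinely different route from the paper. The paper works at the level of densities: it writes the marginal density as $f_{T_1}(t_1)=\int_0^\infty f_{T_1,T_2}(t_1,t_2)\,dt_2$, substitutes $u=(t_2/\beta_2)^{\nu_2}-(\beta_2/t_2)^{\nu_2}$ (whose differential absorbs exactly the Jacobian factor $\tfrac{\nu_2}{\beta_2}[(t_2/\beta_2)^{\nu_2-1}+(\beta_2/t_2)^{\nu_2+1}]$), and then evaluates the resulting Gaussian integral over $u\in(-\infty,\infty)$ by completing the square in the bivariate normal exponent, which collapses the normalizing constant $\tfrac{1}{2\pi\sqrt{1-\rho^2}}$ down to $\tfrac{1}{\sqrt{2\pi}}$ and leaves the univariate $\nu$-BS density of $T_1$. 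Your argument instead works at the level of cdfs: you let $t_2\to\infty$ in the joint cdf, observe that the second argument of $\Phi_2$ tends to $+\infty$ because $\nu_2>0$, and invoke the marginalization property $\lim_{v\to\infty}\Phi_2(u,v;\rho)=\Phi(u)$, which holds since the coordinates of a standard bivariate normal vector have standard normal marginals. Both proofs are sound; yours is shorter and pushes all the analytic work into a single well-known property of $\Phi_2$, while the paper's computation is more explicit and has the side benefit of exhibiting the change of variables that is reused throughout the paper (it is the same $\xi_\nu$ transformation underlying the moments and the MLE equations). You correctly identified the pdf-integration route as the laborious alternative --- that is precisely the route the paper takes.
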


\begin{proof}
\begin{align*}
f_{T_1}(t_1,t_2)&=\int_{t_2}f_{T_1,T_2}(t_1, t_2)dt_2\\
&=\frac{\nu_1\nu_2}{\alpha_1\alpha_2\beta_1\beta_2 2\pi\sqrt(1-\rho^2)}\bigg[\bigg(\frac{t_1}{\beta_1}\bigg)^{\nu_1-1}+\bigg(\frac{\gamma_1}{t_1}\bigg)^{\nu_1+1}\bigg]\int_0^\infty\bigg[\bigg(\frac{t_2}{\beta_2}\bigg)^{\nu_2-1}+\bigg(\frac{\beta_2}{t_2}\bigg)^{\nu_2+1}\bigg]\\
&e^{\frac{-1}{2(1-\rho^2)}\bigg[\frac{1}{\alpha_1^2}\bigg(\bigg(\frac{t_1}{\beta_1}\bigg)^{\nu_1}-
\bigg(\frac{\beta_1}{t_1}\bigg)^{\nu_1}\bigg)^2+\frac{1}{\alpha_2^2}\bigg(\bigg(\frac{t_2}{\beta_2}\bigg)^{\nu_2}-
\bigg(\frac{\beta_2}{t_2}\bigg)^{\nu_2}\bigg)^2-\frac{2\rho}{\alpha_1\alpha_2}\bigg[\bigg(\frac{t_1}{\beta_1}\bigg)^{\nu_1}-
\bigg(\frac{\beta_1}{t_1}\bigg)^{\nu_1}\bigg]\bigg[\bigg(\frac{t_2}{\beta_2}\bigg)^{\nu_2}-\bigg(\frac{\beta_2}{t_2}\bigg)^{\nu_2}\bigg]\bigg]}dt_2.
\end{align*}
\begin{align*}
Put~~ & u=(\frac{t_2}{\beta_2})^{\nu_2}-(\frac{\beta_2}{t_2})^{\nu_2}\\
&du=\frac{\nu_2}{\beta_2}\bigg[(\frac{t_2}{\beta_2})^{\nu_2-1}+(\frac{\beta_2}{t_2})^{\nu_2+1}\bigg]dt_2.
\end{align*}
\begin{eqnarray*}
  f(t_1) =\frac{\nu_1}{\alpha_1\beta_1\sqrt(2\pi)}\bigg[\bigg(\frac{t_1}{\beta_1}\bigg)^{\nu_1-1}+\bigg(\frac{\beta_1}{t_1}\bigg)^{\nu_1+1}\bigg]
  e^{-\frac{1}{2\alpha_1^2}\bigg[\bigg(\frac{t_1}{\beta_1}\bigg)^{2\nu_1}+\bigg(\frac{\beta_1}{t_1}\bigg)^{2\nu_1}-2\bigg]}.
\end{eqnarray*}
\end{proof}

\begin{theorem}
If $(T_1, T_2) \sim\nu-BS(\alpha_1,\beta_1,\nu_1,\alpha_2,\beta_2,\nu_2,\rho)$ then
\begin{enumerate}
  \item $(T_1^{-1}, T_2^{-1})\sim\nu-BS(\alpha_1,\frac{1}{\beta_1},\nu_1,\alpha_2,\frac{1}{\beta_2},\nu_2,\rho)$
  \item $(T_1^{-1}, T_2)\sim\nu-BS(\alpha_1,\frac{1}{\beta_1},\nu_1,\alpha_2,\beta_2,\nu_2,\rho)$
   \item $(T_1,
   T_2^{-1})\sim\nu-BS(\alpha_1,\beta_1,\nu_1,\alpha_2,\frac{1}{\beta_2},\nu_2,\rho)$
   \end{enumerate}
\end{theorem}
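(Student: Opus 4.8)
The plan is to handle all three parts uniformly by a bivariate change of variables, exploiting the fact that the joint density $f_{T_1,T_2}$ depends on each coordinate only through the kernel
\[
\xi_i=\left(\tfrac{t_i}{\beta_i}\right)^{\nu_i}-\left(\tfrac{\beta_i}{t_i}\right)^{\nu_i}
\]
and through the accompanying factor $\left(\tfrac{t_i}{\beta_i}\right)^{\nu_i-1}+\left(\tfrac{\beta_i}{t_i}\right)^{\nu_i+1}$. Equivalently, one may use the stochastic representation: $(T_1,T_2)$ is BV $\nu$-BS precisely when $(Z_1,Z_2):=(\xi_1/\alpha_1,\xi_2/\alpha_2)$ is a standard bivariate normal pair with correlation $\rho$. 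Reducing the question to how $(Z_1,Z_2)$ behaves under inversion of the $T_i$ makes all three cases transparent simultaneously.

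First I would record the single key identity. Writing $y_i=1/t_i$ and replacing $\beta_i$ by $1/\beta_i$, a direct substitution gives $t_i/\beta_i=1/(\beta_i y_i)$ and $\beta_i/t_i=\beta_i y_i$, so the kernel evaluated at the inverted variable with inverted scale equals $-\xi_i$; that is, inversion merely flips the sign of the kernel. This is the bivariate analogue of the univariate computation behind Theorem~1, and it keeps $\nu_i$ unchanged once $\beta_i$ is inverted. Because each kernel enters the exponent of $f_{T_1,T_2}$ only through the squares $\xi_1^2/\alpha_1^2$ and $\xi_2^2/\alpha_2^2$, these two ``marginal'' contributions are invariant under any such inversion.

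The step that separates the three cases is the cross term $-\frac{2\rho}{\alpha_1\alpha_2}\,\xi_1\xi_2$ in the exponent, together with the correlation $\rho$ carried by $\Phi_2$. In case (1) both kernels flip sign, so the product $\xi_1\xi_2$ is unchanged and the correlation is preserved; hence $(T_1^{-1},T_2^{-1})$ is again BV $\nu$-BS with each $\beta_i$ replaced by $1/\beta_i$ and the same $\rho$. In cases (2) and (3) exactly one kernel flips sign, so tracking the cross term is the delicate part; this reduces to the elementary fact about how the correlation of a standard bivariate normal transforms when one coordinate is reflected, which is what permits the transformed density to be identified as a BV $\nu$-BS law with a single scale parameter inverted. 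I expect this sign bookkeeping in (2) and (3) to be the only genuine obstacle.

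Finally I would close the routine bookkeeping on the Jacobian and the normalizing constant. The substitution $t_i=1/y_i$ contributes $|dt_i/dy_i|=y_i^{-2}$, and multiplying the companion factor $\left(\tfrac{t_i}{\beta_i}\right)^{\nu_i-1}+\left(\tfrac{\beta_i}{t_i}\right)^{\nu_i+1}$ by $y_i^{-2}$ reproduces exactly the factor attached to the inverted variable with scale $1/\beta_i$, just as in the univariate proof of Theorem~1; the normalizing constant and the $\nu_i,\alpha_i$ prefactors then combine in the same way. Matching the resulting density against the BV $\nu$-BS template with the prescribed parameters in each of the three cases completes the argument, with everything beyond the cross-term sign being direct substitution inherited from the univariate setting.
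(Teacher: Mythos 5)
The paper itself offers no proof of this theorem --- it is stated and immediately followed by the estimation subsection --- so there is nothing to compare your argument against; your density-transformation approach (equivalently, the stochastic representation $Z_i=\xi_i/\alpha_i$) is the natural one, and your Jacobian and marginal bookkeeping do carry over from the univariate Theorem~1 exactly as you say.

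The genuine gap is that you stop at precisely the step that decides whether the theorem, as stated, is even true. You correctly observe that in cases (2) and (3) exactly one kernel flips sign, and you defer to ``the elementary fact about how the correlation of a standard bivariate normal transforms when one coordinate is reflected'' --- but that fact is that negating one coordinate sends $\rho$ to $-\rho$. Carrying your own computation through: the cross term $-\tfrac{2\rho}{\alpha_1\alpha_2}\xi_1\xi_2$ becomes $+\tfrac{2\rho}{\alpha_1\alpha_2}\xi_1'\xi_2=-\tfrac{2(-\rho)}{\alpha_1\alpha_2}\xi_1'\xi_2$, while the $(1-\rho^2)$ factors are unaffected, so the transformed density matches the BV $\nu$-BS template only with correlation $-\rho$. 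Hence $(T_1^{-1},T_2)\sim\nu\text{-BS}(\alpha_1,\tfrac{1}{\beta_1},\nu_1,\alpha_2,\beta_2,\nu_2,-\rho)$ and similarly for case (3); this agrees with the analogous result of Kundu et al.\ for the ordinary bivariate BS law, and it contradicts the statement you were asked to prove, which keeps $\rho$ in all three cases. A complete answer must either exhibit this sign and note that parts (2) and (3) of the statement need $-\rho$, or explain why $\rho$ survives --- it does not. Only part (1), where both kernels flip and the product $\xi_1\xi_2$ is invariant, is provable exactly as stated, and your argument for that part is correct.
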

\subsubsection{\textbf{Estimation}}
\par Based on a bivariate random sample $\{(t_{1i}, t_{2i}), i = 1, 2, ..., n\}$ from the $\nu-BVBS(\alpha_1, \beta_1, \nu_1, \alpha_2, \beta_2, \nu_2, \rho)$
distribution, the MLEs of the unknown parameters can be obtained by
maximizing the log likelihood function. If we denote
$\theta=(\alpha_1, \beta_1, \nu_1, \alpha_2, \beta_2, \nu_2, \rho)$
then the likelihood function is
\begin{multline*}
  L(\theta)= \bigg(\frac{\nu_1\nu_2}{\alpha_1\alpha_2\beta_1\beta_2 2\pi \sqrt(1-\rho^2)}\bigg)^n \Pi_{i=1}^n\bigg[\bigg(\frac{t_{1i}}{\beta_1}\bigg)^{\nu_1-1}+\bigg(\frac{\beta_1}{t_{1i}}\bigg)^{\nu_1+1}\bigg]\bigg[\bigg(\frac{t_{2i}}{\beta_2}\bigg)^{\nu_2-1}+\bigg(\frac{\beta_2}{t_{2i}}\bigg)^{\nu_2+1}\bigg]\\
  exp\{-\frac{1}{2(1-\rho^2)}\Sigma_{i=i}^n\bigg[\frac{1}{\alpha_1^2}\bigg((\frac{t_{1i}}{\beta_1})^{\nu_1}-(\frac{\beta_1}{t_{1i}})^{\nu_1}\bigg)^2+ \frac{1}{\alpha_2^2}\bigg((\frac{t_{2i}}{\beta_2})^{\nu_2}-(\frac{\beta_2}{t_{2i}})^{\nu_2}\bigg)^2-\\
  \frac{2\rho}{\alpha_1\alpha_2}\bigg(\bigg(\frac{t_{1i}}{\beta_1}\bigg)^{\nu_1}-
  \bigg(\frac{\beta_1}{t_{1i}}\bigg)\bigg)\bigg(\bigg(\frac{t_{2i}}{\beta_2}\bigg)^{\nu_2}-\bigg(\frac{\beta_2}{t_{2i}}\bigg)\bigg)
  \bigg]\}.
\end{multline*}
The log-likelihood function is
\begin{multline*}
  \log L(\theta)=n \log \nu_1+n\log \nu_2-n\log \alpha_1 -n\log \alpha_2-n\log \beta_1-n \log \beta_2-n\log {2\pi}-\frac{n}{2}\log(1-\rho^2)\\
  +\Sigma_{i=1}^n\log \bigg[\bigg(\frac{t_{1i}}{\beta_1}\bigg)^{\nu_1-1}+\bigg(\frac{\beta_1}{t_{1i}}\bigg)^{\nu_1+1}\bigg]
  +\Sigma_{i=1}^n\log\bigg[\bigg(\frac{t_{2i}}{\beta_2}\bigg)^{\nu_2-1}+\bigg(\frac{\beta_2}{t_{2i}}\bigg)^{\nu_2+1}\bigg]\\
  -\frac{1}{2(1-\rho^2)}\bigg[{\Sigma_{i=1}^n\frac{1}{\alpha_1^2}\bigg(\bigg(\frac{t_{1i}}{\beta_1}\bigg)^{\nu_1}-\bigg(\frac{\beta_1}{t_{1i}}\bigg)^{\nu_1}\bigg)^2}+\Sigma_{i=1}^n\frac{1}{\alpha_2^2\bigg(\bigg(\frac{t_{2i}}{\beta_2}\bigg)^{\nu_2}-\bigg(\frac{\beta_2}{t_{2i}}\bigg)^{\nu_2}\bigg)^2}\\
  -\frac{2\rho}{\alpha_1\alpha_2}\Sigma_{i=1}^n\bigg[\bigg(\frac{t_{1i}}{\beta_1}\bigg)^{\nu_1}-
  \bigg(\frac{\beta_1}{t_{1i}}\bigg)^{\nu_1}\bigg]\bigg[\bigg(\frac{t_{2i}}{\beta_2}\bigg)^{\nu_2}-\bigg(\frac{\beta_2}{t_{2i}}\bigg)^{\nu_2}\bigg]\bigg].
\end{multline*}

\section{MULTIVARIATE $\nu$- BIRNBAUM SAUNDERS DISTRIBUTION }
\par Along the same lines as the univariate and bivariate $\nu-$BS distribution, the multivariate $\nu-$BS distribution can be defined.

Then the multivariate $\nu-$BS distribution is as follows:
\begin{definition}
Let $\underline{\alpha},\underline{\beta}\in \mathbb{R}^m$, where $\underline{\alpha}=(\alpha_1,\cdots, \alpha_m)^T$ and $\underline{\beta}=(\beta_1,\cdots,\beta_m)^T$, with $\alpha_1>0, \beta_i>0$ for $i=1,2,\cdots, m$. Let $\mathbf{\Gamma}$ be a $m\times m$ positive definite correlation matrix. Then, the random vector $\underline{T}=(T_1,\cdots, T_m)^T$ is said to have a m-variate BS distribution with parameters $(\underline{\alpha},\underline{\beta},\mathbf{\Gamma},\nu)$ if it has the joint CDF as
\begin{align*}
P(\underline{T}\leq\underline{t})&= P(T_1\leq t_1,\cdots, T_m \leq t_m)\\
&=\Phi_m\bigg[\frac{1}{\alpha_1}\bigg(\bigg(\frac{t_1}{\beta_1}\bigg)^\nu-\bigg(\frac{\beta_1}{t_1}\bigg)^\nu\bigg),\cdots,\frac{1}{\alpha_m}\bigg(\bigg(\frac{t_m}{\beta_m}\bigg)^\nu-\bigg(\frac{\beta_m}{t_m}\bigg)^\nu\bigg);\Gamma\bigg]
\end{align*}
for $t_1 > 0,\cdots, t_m > 0$ and $0 < \nu <1$. Here, for
$\underline{u}=(u_1,\cdots, u_m)^T,
\Phi_m(\underline{u};\mathbf{\Gamma})$ denotes the joint cdf of a
standard Normal vector $\underline{Z}=(Z_1,\cdots, Z_m)^T$ with
correlation matrix $\mathbf{\Gamma}$.
\end{definition}
\subsection{\textbf{Applications}}
Birnbaum and Saunders (1958) obtained fatigue life real data corresponding to cycles $(\times 10^{-3})$ until failure of aluminum specimens of type 6061-T6, see Table 1. These specimens were cut parallel to the direction of rolling and oscillating at 18 cycles per seconds. They were exposed to a pressure with maximum stress 31,000 pounds per square inch (psi) for $n= 101$ specimens for each level of stress. All specimens were tested until failure.\\

\begin{table}[h!]
\caption{Fatigue lifetime data  }
\begin{tabular}{|c|c|c|c|c|c|c|c|c|c|c|c|c|c|c|}
  \hline
  70 & 90 & 96 & 97 & 99 & 100 & 103 & 104 & 104 & 105 & 107 & 108 & 108 & 108 & 109 \\
  109 & 112 & 112 & 113 & 114 & 114 & 114 & 116 & 119 & 120 & 120 & 120 & 121 & 121 & 123 \\
  124 & 124 & 124 & 124 & 124 & 128 & 128 & 129 & 129 & 130 & 130 & 130 & 131 & 131 & 131 \\
  131 & 131 & 132 & 132 & 132 & 133 & 134 & 134 & 134 & 134 & 134 & 136 & 136 & 137 & 138 \\
  138 & 138 & 139 & 139 & 141 & 141 & 142 & 142 & 142 & 142 & 142 & 142 & 144 & 144 & 145 \\
  146 & 148 & 148 & 149 & 151 & 151 & 152 & 155 & 156 & 157 & 157 & 157 & 157 & 158 & 159 \\
  162 & 163 & 163 & 164 & 166 & 166 & 168 & 170 & 174 & 196 & 212 &  &  &  &  \\
  \hline
\end{tabular}
\end{table}
For this data set the point estimates of $\alpha, \beta$ and $\nu$ obtained by the method of maximum likelihood are given in the table 2.\\
\begin{center}\begin{table}[h!]
\caption{Point estimates of $\alpha, \beta$ and $\nu$}
\begin{tabular}{|c|c|c|}
  \hline
  $\alpha$ & $\beta$ & $\nu$ \\
  \hline
  1.509180e+00 & 1.179090e-06 & 6.198936e+00 \\

  \hline
\end{tabular}
\end{table}
\end{center}

Here Kolmogrov-Smirnov test statistic is 0.9703 and p-value is
0.3088. Since p-value is greater than significance level we can
conclude that univariate $\nu-$Birnbaum-Saunders distribution is a
good fit for given data.

\subsection{\textbf{Conclusion}}
\par This paper discussed a generalization of BS distribution.This three parameter distribution is more plausible model for the distribution of fatigue
failure. There is flexibility in selection of models. Peakedness
depends on the value of $\nu$. It is a good model for distributions
with smaller variances. Moreover the shape of density curve with
various skewness and kurtosis provide a well defined class of life
distributions useful in reliability and social sciences. Snedecors F
distribution has the property that reciprocal is also F, similar
property holds for $\nu-$Birnbaum Saunders distribution. But
computation of MLE is a complicated one. But numerical procedure is
applied for computation of MLE.


\end{document}